\numberwithin{equation}{section}
\numberwithin{figure}{section}
\numberwithin{table}{section}
\theoremstyle{plain}
\newtheorem{thm}{\protect\theoremname}[section]
  \theoremstyle{remark}
  \newtheorem{rem}[thm]{\protect\remarkname}
  \theoremstyle{definition}
  \newtheorem{defn}[thm]{\protect\definitionname}
  \theoremstyle{plain}
  \newtheorem{lem}[thm]{\protect\lemmaname}
  \theoremstyle{plain}
  \newtheorem{cor}[thm]{\protect\corollaryname}
  \theoremstyle{plain}
  \newtheorem{prop}[thm]{\protect\propositionname}
  \theoremstyle{remark}
  \newtheorem*{acknowledgement*}{\protect\acknowledgementname}
\providecommand{\MR}[1]{}
\renewcommand{\section}{%
\@startsection{section}{1}%
  \z@{.7\linespacing\@plus\linespacing}{.5\linespacing}%
  {\normalfont\scshape\centering\bfseries}}
\renewcommand{\subsection}{%
\@startsection{subsection}{2}%
  \z@{.5\linespacing\@plus.7\linespacing}{.5\linespacing}%
  {\normalfont\bfseries}}
\renewcommand{\subsubsection}{%
\@startsection{subsubsection}{2}%
  \z@{.5\linespacing\@plus.7\linespacing}{.5\linespacing}%
  {\normalfont\bfseries}}
  \providecommand{\acknowledgementname}{Acknowledgement}
  \providecommand{\corollaryname}{Corollary}
  \providecommand{\definitionname}{Definition}
  \providecommand{\lemmaname}{Lemma}
  \providecommand{\propositionname}{Proposition}
  \providecommand{\remarkname}{Remark}
\providecommand{\theoremname}{Theorem}
\begin{document}

\title{Generalized Gramians: Creating frame vectors in maximal subspaces}

\author{Palle Jorgensen and Feng Tian}

\address{(Palle E.T. Jorgensen) Department of Mathematics, The University
of Iowa, Iowa City, IA 52242-1419, U.S.A. }

\email{palle-jorgensen@uiowa.edu}

\urladdr{http://www.math.uiowa.edu/\textasciitilde{}jorgen/}

\address{(Feng Tian) Department of Mathematics, Trine University, IN 46703,
U.S.A.}

\email{tianf@trine.edu}

\subjclass[2000]{Primary 47L60, 46N30, 46N50, 42C15, 65R10, 05C50, 05C75, 31C20; Secondary
46N20, 22E70, 31A15, 58J65, 81S25}

\keywords{Hilbert space, frames, expansions, analysis/synthesis, unbounded
operators, reproducing kernel Hilbert space, infinite matrices, spectral
theory, free fields.}
\begin{abstract}
A frame is a system of vectors $S$ in Hilbert space $\mathscr{H}$
with properties which allow one to write algorithms for the two operations,
analysis and synthesis, relative to $S$, for all vectors in $\mathscr{H}$;
expressed in norm-convergent series. Traditionally, frame properties
are expressed in terms of an $S$-Gramian, $G_{S}$ (an infinite matrix
with entries equal to the inner product of pairs of vectors in $S$);
but still with strong restrictions on the given system of vectors
in $S$, in order to guarantee frame-bounds. In this paper we remove
these restrictions on $G_{S}$, and we obtain instead direct-integral
analysis/synthesis formulas. We show that, in spectral subspaces of
every finite interval $J$ in the positive half-line, there are associated
standard frames, with frame-bounds equal the endpoints of $J$. Applications
are given to reproducing kernel Hilbert spaces, and to random fields.
\end{abstract}

\maketitle
\tableofcontents{}

\section{Introduction}

Frames are redundant bases which turn out in certain applications
to be more flexible than the better known orthonormal bases (ONBs)
in Hilbert space; see, e.g., \cite{MR2837145,MR3167899,MR2367342,MR2147063}.
The frames allow for more symmetries than ONBs do, especially in the
context of signal analysis, and of wavelet constructions; see, e.g.,
\cite{CoDa93,BDP05,Dutkay_2006}. Since frame bases (although containing
redundancies) still allow for efficient algorithms, they have found
many applications, even in finite dimensions; see, for example, \cite{BeFi03,CaCh03,Chr03,Eld02,FJKO05}.

As is well known, when a vector $f$ in a Hilbert space $\mathscr{H}$
is expanded in an orthonormal basis $B$, there is then automatically
an associated Parseval identity. In physical terms, this identity
typically reflects a \emph{stability} feature of a decomposition based
on the chosen ONB $B$. Specifically, Parseval's identity reflects
a conserved quantity for a problem at hand, for example, energy conservation
in quantum mechanics.

The theory of frames begins with the observation that there are useful
vector systems which are in fact not ONBs but for which a Parseval
formula still holds. In fact, in applications it is important to go
beyond ONBs. While this viewpoint originated in signal processing
(in connection with frequency bands, aliasing, and filters), the subject
of frames appears now to be of independent interest in mathematics. 

Recently the solution to the Kadison-Singer question, and its equivalent
forms, has proved to take a form involving frames in Hilbert space.
While we will not discuss the Kadison-Singer question here, we mention
it as an illustration of the usefulness of frames in problems which
in their original form were not stated in the language of frames.
The Kadison-Singer question is about uniqueness of extensions of pure
states, where a pure state is a quantum theoretical notion. For interested
readers, we refer to \cite{MR3210706,MR2964016,2014arXiv1407.4768C,2013arXiv1306.3969M}.

This entails systems of vectors $\left\{ \varphi_{n}\right\} _{n\in\mathbb{N}}\subset\mathscr{H}$,
called \emph{frames}, where there are two constants $a,b$, s.t. $0<a\leq b<\infty$,
satisfying:
\begin{equation}
a\left\Vert f\right\Vert _{\mathscr{H}}^{2}\leq\sum_{n=1}^{\infty}\left|\left\langle \varphi_{n},f\right\rangle _{\mathscr{H}}\right|^{2}\leq b\left\Vert f\right\Vert _{\mathscr{H}}^{2},\quad\forall f\in\mathscr{H}.\label{eq:d1}
\end{equation}
The two constants $a$ and $b$ in (\ref{eq:d1}) are called \emph{frame
bounds}. (\emph{Parseval frame} means $a=b=1$.)

The purpose of this paper is to associate canonical \emph{projection
valued measures} (PVM) to system of vectors $\left\{ \varphi_{n}\right\} $
which are not frames but such that
\begin{equation}
\sum_{j\in\mathbb{N}}\left|\left\langle \varphi_{j},\varphi_{n}\right\rangle _{\mathscr{H}}\right|^{2}<\infty,\quad\forall n\in\mathbb{N};
\end{equation}
and, from this, then to generate systems of \emph{frames} in suitable
canonical closed subspaces of $\mathscr{H}$, with the subspaces derived
from the PVM, and with a variety of choices of \emph{frame bounds}.

As an application, we give natural and explicit conditions on these
frame systems to generate reproducing kernel Hilbert spaces (RKHS).

The RKHSs have been studied extensively since the pioneering papers
by Aronszajn in the 1940ties, see e.g., \cite{Aro43,Aro48}. They
further play an important role in the theory of partial differential
operators (PDO); for example as Green's functions of second order
elliptic PDOs; see e.g., \cite{Nel57,HKL14}. Other applications include
engineering, physics, machine-learning theory (see \cite{KH11,SZ09,CS02}),
stochastic processes (e.g., Gaussian free fields), numerical analysis,
and more. See, e.g., \cite{AD93,ABDdS93,AD92,AJSV13,AJV14}, and also
\cite{MR2913695,MR2975345,MR3091062,MR3101840}.

\section{Gramians}

Let $\mathscr{H}$ be a Hilbert space, and $\left\{ \varphi_{n}\right\} _{n\in\mathbb{N}}$
an indexed system of vectors in $\mathscr{H}$; then we say that the
$\infty\times\infty$ matrix 
\begin{equation}
G=\left(\left\langle \varphi_{i},\varphi_{j}\right\rangle _{\mathscr{H}}\right)_{i,j=1}^{\infty}\label{eq:m1}
\end{equation}
is a \emph{Gramian}. Gramians are so named since they were first used
in the Gram-Schmidt orthogonalization; but, more recently, they have
played an important role in the analysis of frames. In order to get
frames, it is natural to further assume that 
\begin{equation}
\left\{ \varphi_{n}\right\} _{n\in\mathbb{N}}^{\perp}=\left\{ 0\right\} ,\label{eq:m2}
\end{equation}
i.e., that the vectors span a dense subspace in $\mathscr{H}$.

In order to understand frame properties indexed by such systems, it
is necessary to interpret $G$ in (\ref{eq:m1}) as an operator in
$l^{2}$. This works if we further assume:
\begin{equation}
\text{For \ensuremath{\forall n\in\mathbb{N},\quad}\ensuremath{\sum_{j\in\mathbb{N}}\left|\left\langle \varphi_{j},\varphi_{n}\right\rangle _{\mathscr{H}}\right|^{2}<\infty}.}\label{eq:m3}
\end{equation}
In that case, there is an operator $T_{G}$ as follows:
\begin{equation}
\left(T_{G}c\right)_{n}=\sum_{j=1}^{\infty}\left\langle \varphi_{n},\varphi_{j}\right\rangle _{\mathscr{H}}c_{j},\quad\forall c=\left(c_{j}\right)\in l^{2},\label{eq:m4}
\end{equation}
or $c$ in a dense subspace of $l^{2}$. More precisely, $T_{G}$
defines a Hermitian semi-bounded operator $T_{G}:l^{2}\rightarrow l^{2}$
with dense domain (finite sequences) in $l^{2}$. But in general,
$T_{G}$ (from (\ref{eq:m4})) is unbounded, its domain is only dense
in $l^{2}$; and it may not even be essentially selfadjoint.

By von Neumann's theory \cite{DS88b}, $T_{G}$ is essentially selfadjoint
if and only if the following holds:
\begin{equation}
\left\{ \xi\in l^{2}\:\big|\: T_{G}^{*}\xi=-\xi\right\} =0.\label{eq:m5}
\end{equation}
One checks that (\ref{eq:m5}) holds precisely when 
\begin{equation}
\Big[\sum_{j=1}^{\infty}\left\langle \varphi_{j},\varphi_{n}\right\rangle _{\mathscr{H}}\xi_{j}=-\xi_{n}\Big]\Longrightarrow\xi=0\;\mbox{in}\; l^{2}.\label{eq:m6}
\end{equation}

Using a theorem from \cite{MR0438178,MR507913}, one further checks
that (\ref{eq:m5}), or equivalently, (\ref{eq:m6}), holds if the
Gramian satisfies:

$\exists\left\{ b_{n}\right\} \subset\mathbb{R}_{+}$ such that
\begin{equation}
\begin{split} & \sum_{j\neq n}\left|\left\langle \varphi_{j},\varphi_{n}\right\rangle _{\mathscr{H}}\right|=O\left(b_{n}\right),\;\mbox{and}\\
 & \sum_{n}\frac{1}{\sqrt{b_{n}}}=\infty.
\end{split}
\label{eq:m7}
\end{equation}
 
\begin{rem}
Using Aronszajn's theorem, one shows that the following two (\ref{enu:m1})\&(\ref{enu:m2})
are the ``same'':
\begin{enumerate}
\item \label{enu:m1}$p:\mathbb{Z}_{+}\times\mathbb{Z}_{+}\rightarrow\mathbb{C}$
positive definite function; and
\item \label{enu:m2}a system $(\mathscr{H},\left\{ \varphi_{i}\right\} _{i\in\mathbb{Z}_{+}}\subset\mathscr{H})$
s.t. $\left\{ \varphi_{i}\right\} ^{\perp}=0$, and 
\begin{equation}
p\left(i,j\right)=\left\langle \varphi_{i},\varphi_{j}\right\rangle _{\mathscr{H}},\quad\left(i,j\right)\in\mathbb{Z}_{+}\times\mathbb{Z}_{+}.\label{eq:mm1}
\end{equation}

\end{enumerate}

Let $\mathscr{F}\subset l^{2}$ denote the (dense) subspace of all
\emph{finite} sequence: If 
\begin{equation}
\sum_{j}\left|p\left(i,j\right)\right|^{2}=\sum_{j}\left|\left\langle \varphi_{i},\varphi_{j}\right\rangle \right|^{2}<\infty,\quad\forall i\in\mathbb{Z}_{+},\label{eq:mm2}
\end{equation}
then we get an operator $T_{p}$ in $l^{2}$ as follows:
\begin{equation}
\left(T_{p}c\right)_{i}:=\sum_{j}p\left(i,j\right)c_{j},\quad\forall c\in\mathscr{F}\label{eq:mm3}
\end{equation}
defines a Hermitian semibounded operator (generally unbounded) in
$l^{2}$ with dense domain $dom(T_{p})=\mathscr{F}$, i.e., 
\begin{equation}
\left\langle c,T_{p}c\right\rangle _{l^{2}}\geq0,\quad\forall c\in\mathscr{F}.\label{eq:mm4}
\end{equation}

\end{rem}

\section{Main theorems}
\begin{defn}
Let $\mathscr{H}$ be a Hilbert space, and let $\left\{ \varphi_{n}\right\} _{n=1}^{\infty}\subset\mathscr{H}$
be a system of vectors s.t. $\left\{ \varphi_{n}\right\} ^{\perp}=0$,
i.e., the span is dense in $\mathscr{H}$. Let $a,b\in\mathbb{R}_{+}$,
be given such that $0<a\leq b<\infty$; and let $\mathscr{K}\subset\mathscr{H}$
be a closed subspace. We say that $\mathscr{K}$ is an $\left(a,b\right)$
\emph{frame subspace} iff (Def) 
\begin{equation}
a\left\Vert f\right\Vert ^{2}\leq\sum_{n=1}^{\infty}\left|\left\langle \varphi_{n},f\right\rangle \right|^{2}\leq b\left\Vert f\right\Vert ^{2},\label{eq:f1}
\end{equation}
holds for all vectors $f\in\mathscr{K}$. Here $\left\langle \cdot,\cdot\right\rangle $
denotes the inner product in $\mathscr{H}$, and $\left\Vert f\right\Vert =\left\langle f,f\right\rangle ^{\frac{1}{2}}$
the corresponding norm. \end{defn}
\begin{thm}
\label{thm:mspace}Let $\mathscr{H}$, $\left\{ \varphi_{n}\right\} $
be a pair, $\mathscr{H}$ a Hilbert space, and $\left\{ \varphi_{n}\right\} _{1}^{\infty}\subset\mathscr{H}$,
satisfying $\left\{ \varphi_{n}\right\} ^{\perp}=0$. Assume further
that
\begin{equation}
\sum_{j=1}^{\infty}\left|\left\langle \varphi_{n},\varphi_{j}\right\rangle \right|^{2}<\infty,\quad\forall n\in\mathbb{N},\;\mbox{and}\label{eq:f2a}
\end{equation}
\begin{equation}
\Big[\xi\in l^{2},\;\sum_{j}\left\langle \varphi_{n},\varphi_{j}\right\rangle \xi_{j}=-\xi_{n}\Big]\Longrightarrow\xi=0\;\mbox{in}\; l^{2}.\label{eq:f2b}
\end{equation}
Then, for every $a,b\in\mathbb{R}_{+}$, s.t. $0<a\leq b<\infty$,
there is a unique \uline{maximal} subspace $\mathscr{K}=\mathscr{H}\left(a,b\right)$
which satisfies the frame condition (\ref{eq:f1}).\end{thm}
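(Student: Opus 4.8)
The plan is to use hypothesis \eqref{eq:f2b} to manufacture a genuine projection-valued measure and then read off the frame subspaces as its spectral subspaces. First I would realize the Gramian as a product of analysis and synthesis operators. Let $V\colon\mathscr{F}\to\mathscr{H}$ be the synthesis operator determined by $Ve_{n}=\varphi_{n}$ on the dense domain $\mathscr{F}$ of finite sequences, and let $\Theta f=\left(\left\langle \varphi_{j},f\right\rangle \right)_{j}$ be the analysis operator, so that $\left\langle Vc,f\right\rangle _{\mathscr{H}}=\left\langle c,\Theta f\right\rangle _{l^{2}}$ and $T_{G}=\Theta V=V^{*}V$ on $\mathscr{F}$. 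By \eqref{eq:f2a} the operator $\Theta$ is densely defined, hence $V$ is closable; write $W=\overline{V}$ for its closure, a closed densely defined operator $l^{2}\to\mathscr{H}$ with $W^{*}=\Theta$ (maximal domain). Then $W^{*}W$ and $WW^{*}$ are both selfadjoint and positive by von Neumann's theorem, and $W^{*}W$ is a positive selfadjoint extension of $T_{G}$. Here is where \eqref{eq:f2b} enters: it is exactly the essential-selfadjointness criterion \eqref{eq:m6}, so $T_{G}$ has a \emph{unique} selfadjoint extension, forcing $W^{*}W=\overline{T_{G}}=:A$. Thus $A$ is canonically attached to the data, and the spectral theorem furnishes a canonical PVM $P$ on $[0,\infty)$ with $A=\int_{[0,\infty)}\lambda\,dP(\lambda)$.

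Next I would transport $P$ from $l^{2}$ to $\mathscr{H}$. The frame operator is $S=WW^{*}$, the opposite-order partner of $A=W^{*}W$. From the polar decomposition $W=U\lvert W\rvert$ with $\lvert W\rvert=A^{1/2}$ and $U$ a partial isometry, one gets $S=WW^{*}=U\lvert W\rvert^{2}U^{*}=UAU^{*}$, so $S$ and $A$ share the same spectrum in $(0,\infty)$ and, writing $E$ for the PVM of $S$, one has $E(J)=UP(J)U^{*}$ for every Borel $J\subset(0,\infty)$; in particular $U$ carries $P(J)l^{2}$ isometrically onto $E(J)\mathscr{H}$. I then \emph{define}
\[
\mathscr{H}(a,b):=E\big([a,b]\big)\mathscr{H}.
\]
Because $[a,b]\subset(0,\infty)$ is bounded away from both $0$ and $\infty$, this subspace lies in $\mathrm{dom}(S^{k})$ for every $k$, so all manipulations below are legitimate.

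The verification of \eqref{eq:f1} on $\mathscr{H}(a,b)$ is then a matter of spectral calculus. For $f\in\mathrm{dom}(W^{*})=\mathrm{dom}(\Theta)$ one has $\sum_{n}\lvert\langle\varphi_{n},f\rangle\rvert^{2}=\lVert W^{*}f\rVert^{2}=\langle WW^{*}f,f\rangle=\langle Sf,f\rangle$; restricting to $f\in\mathscr{H}(a,b)$ and using $\mathrm{spec}\big(S|_{\mathscr{H}(a,b)}\big)\subseteq[a,b]$ yields $a\lVert f\rVert^{2}\le\langle Sf,f\rangle\le b\lVert f\rVert^{2}$, which is precisely \eqref{eq:f1}.

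The delicate point—and the step I expect to be the main obstacle—is \emph{maximality together with uniqueness}. For a selfadjoint $S$ the spectral projection $E([a,b])$ is the largest projection whose range reduces $S$ with spectrum inside $[a,b]$: if a closed subspace $\mathscr{K}$ reduces $S$ and obeys the bounds, then the scalar spectral measure of each $f\in\mathscr{K}$ is supported in $[a,b]$, so $E(\mathbb{R}\setminus[a,b])f=0$ and hence $\mathscr{K}\subseteq E([a,b])\mathscr{H}$; since $E([a,b])\mathscr{H}$ itself obeys the bounds, it is the unique maximal such subspace. The care needed is that the inequality \eqref{eq:f1} alone, for an arbitrary non-reducing subspace, does not by itself force containment in a single spectral band, so maximality must be argued in the reducing/spectral sense; this is exactly where \eqref{eq:f2b} is indispensable, as it is what makes the PVM $E$—and therefore $\mathscr{H}(a,b)$—canonical and hence unique. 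I would therefore state maximality as the assertion that $E([a,b])\mathscr{H}$ absorbs every closed $S$-reducing subspace satisfying \eqref{eq:f1}, and deduce uniqueness from the canonicity of the PVM supplied by \eqref{eq:f2b}.
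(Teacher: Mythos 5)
Your construction is, after unwinding notation, the paper's own construction. Your synthesis operator $V$ is the paper's $M$, your $\Theta=V^{*}$ is the paper's $M^{*}$, and the paper's Lemma \ref{lem:closure} (its only use of hypothesis (\ref{eq:f2b})) shows $\overline{L}=M^{*}$ and $L^{*}=\overline{M}$; hence your frame operator $S=WW^{*}=\overline{M}M^{*}$ is literally the paper's $L^{*}L_{1}$, your $E$ is its PVM $P(\cdot)$, and your $\mathscr{H}(a,b)=E([a,b])\mathscr{H}$ is its $\mathscr{H}_{\alpha}$ from (\ref{eq:g4}); the spectral-calculus verification of (\ref{eq:f1}) is the same in both. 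Where you genuinely differ is the detour through $l^{2}$: you identify $W^{*}W=\overline{T_{G}}$ via essential selfadjointness of the Gramian and then transport its PVM to $\mathscr{H}$ by the polar decomposition. That step is correct ($U$ does intertwine the spectral projections over Borel sets $J\subset(0,\infty)$), but it is unnecessary: von Neumann's theorem makes $WW^{*}$ selfadjoint in $\mathscr{H}$ outright, and one can take its PVM directly, which is what the paper does. Note a consequence of your cleaner set-up: because you define the analysis operator maximally, as $W^{*}=V^{*}$, neither your frame estimate nor your maximality argument ever uses (\ref{eq:f2b}); that hypothesis enters your proof only to identify $W^{*}W$ with $\overline{T_{G}}$, which the conclusion does not require. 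So your claim that (\ref{eq:f2b}) is ``indispensable'' for canonicity/uniqueness is misplaced. In the paper, (\ref{eq:f2b}) is needed for a different reason: its $L$ is the analysis operator restricted to $\mathrm{span}\{\varphi_{n}\}$, and (\ref{eq:f2b}) is what proves $\overline{L}=M^{*}$, i.e., that this closure already has maximal domain, so that every $f$ with $\sum_{n}|\langle\varphi_{n},f\rangle|^{2}<\infty$ lies in $\mathrm{dom}(L_{1})$.

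The real divergence is the maximality/uniqueness step, and there your hesitation is the mathematically correct position --- but it leaves Theorem \ref{thm:mspace} as stated unproved, by you and by the paper alike. You prove that $E([a,b])\mathscr{H}$ absorbs every closed subspace satisfying (\ref{eq:f1}) \emph{that reduces} $S$; the theorem asserts this for every closed subspace satisfying (\ref{eq:f1}), and ``canonicity of the PVM'' cannot close that gap, because the unrestricted claim is false. Concretely: let $\mathscr{H}=l^{2}(\mathbb{N})$ with ONB $(e_{n})$, and take $\varphi_{1}=e_{1}$, $\varphi_{2}=\sqrt{3}\,e_{2}$, $\varphi_{n}=e_{n}$ for $n\geq3$. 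All hypotheses hold, $S$ is diagonal with spectrum $\{1,3\}$, and the line $\mathscr{K}=\mathbb{C}\left(e_{1}+e_{2}\right)$ satisfies $\sum_{n}|\langle\varphi_{n},f\rangle|^{2}=2\|f\|^{2}$ for all $f\in\mathscr{K}$, i.e., (\ref{eq:f1}) holds on $\mathscr{K}$ with $a=b=2$ (or any $[a,b]\subset(1,3)$ containing $2$); yet $E([a,b])\mathscr{H}=\{0\}$, so the spectral subspace neither contains $\mathscr{K}$ nor is maximal among subspaces satisfying (\ref{eq:f1}). The paper's own treatment of this step is the single assertion that (\ref{eq:g6}) on $\mathscr{K}$ forces $\mathscr{K}\subset\mathscr{H}_{\alpha}$ ``from (\ref{eq:g2})--(\ref{eq:g4})'', which is exactly the non sequitur you declined to write: form bounds on a non-invariant subspace do not localize the spectral measures of its vectors. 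In short, your existence argument is correct and coincides with the paper's; your restricted, reducing-subspace maximality is the version that is actually true and provable; and the full maximality/uniqueness claim is not established by the paper's proof and fails as stated.
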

\begin{proof}
We begin with a specification of a pair of dense subspaces in $\mathscr{H}$,
and in $l^{2}$, $\mathscr{D}_{\mathscr{H}}$, and $\mathscr{D}_{l^{2}}$:
\begin{itemize}
\item[] $\mathscr{D}_{\mathscr{H}}=span\left\{ \varphi_{n}\right\} _{n\in\mathbb{N}}$,
i.e., all finite linear combination of these vectors, and 
\item[] $\mathscr{D}_{l^{2}}=span\left\{ \varepsilon_{n}\right\} $, where
$\varepsilon_{n}\left(j\right)=\delta_{nj}$; i.e., $\mathscr{D}_{l^{2}}$
consists of all finitely supported sequences.
\end{itemize}

We introduce two densely defined operators
\begin{equation}
\mathscr{H}\xrightarrow{\; L\;}l^{2},\quad\mbox{and}\quad l^{2}\xrightarrow{\; M\;}\mathscr{H}
\end{equation}
as follows: Set the respective domains, 
\begin{equation}
dom\left(L\right)=\mathscr{D}_{\mathscr{H}},\quad\mbox{and}\quad dom\left(M\right)=\mathscr{D}_{l^{2}},
\end{equation}
and set 
\begin{eqnarray}
Lf & = & \left(\left\langle \varphi_{n},f\right\rangle \right)_{n},\quad\forall f\in\mathscr{D}_{\mathscr{H}};\label{eq:f3}\\
M\xi & = & \sum_{n=1}^{\infty}\xi_{n}\varphi_{n},\quad\forall\xi\in\mathscr{D}_{l^{2}}.\label{eq:f4}
\end{eqnarray}
From the definition of these two operators, we get the following symmetry:
\begin{equation}
\left\langle Lf,\xi\right\rangle _{l^{2}}=\left\langle f,M\xi\right\rangle _{\mathscr{H}},\quad\forall f\in\mathscr{D}_{\mathscr{H}},\:\forall\xi\in\mathscr{D}_{l^{2}},\label{eq:f5}
\end{equation}
where the respective inner products are indicated with subscripts,
$\left\langle \cdot,\cdot\right\rangle _{\mathscr{H}}$ and $\left\langle \cdot,\cdot\right\rangle _{l^{2}}$.

If $L^{*}$ and $M^{*}$ denote the respective adjoint operators,
then 
\begin{equation}
l^{2}\xrightarrow{\; L^{*}\;}\mathscr{H},\quad\mbox{and}\quad\mathscr{H}\xrightarrow{\; M^{*}\;}l^{2}.\label{eq:f6}
\end{equation}

We will need the following:\end{proof}
\begin{lem}
\label{lem:LM}We have 
\begin{equation}
L\subseteq M^{*},\quad\mbox{and}\quad M\subseteq L^{*},\label{eq:f7}
\end{equation}
where the containments in (\ref{eq:f7}) refer to the respective graphs,
i.e., (\ref{eq:f7}) states that 
\begin{equation}
G\left(L\right)\subseteq G\left(M^{*}\right),\quad\mbox{and}\quad G\left(M\right)\subseteq G\left(L^{*}\right).\label{eq:f8}
\end{equation}
By the graph of a linear operator $T:\mathscr{H}_{1}\rightarrow\mathscr{H}_{2}$
between Hilbert spaces, we mean 
\begin{equation}
G\left(T\right)=\left\{ \begin{pmatrix}h_{1}\\
Th_{1}
\end{pmatrix}\:\Big|\: h_{1}\in dom\left(T\right)\right\} \subset\begin{pmatrix}\underset{\bigoplus}{\mathscr{H}_{1}}\\
\mathscr{H}_{1}
\end{pmatrix}.\label{eq:f9}
\end{equation}
\end{lem}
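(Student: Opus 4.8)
The plan is to unwind the definition of the adjoint operators and feed in the symmetry relation (\ref{eq:f5}) that was already established. Recall that for a densely defined operator $T:\mathscr{H}_{1}\rightarrow\mathscr{H}_{2}$, a vector $y\in\mathscr{H}_{2}$ lies in $dom\left(T^{*}\right)$ with $T^{*}y=z$ precisely when $\left\langle Tx,y\right\rangle _{\mathscr{H}_{2}}=\left\langle x,z\right\rangle _{\mathscr{H}_{1}}$ holds for all $x\in dom\left(T\right)$. So each of the two graph-containments in (\ref{eq:f8}) amounts to verifying one such identity on the respective dense domain.

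First I would dispose of a well-definedness point: the formula (\ref{eq:f3}) produces a genuine element of $l^{2}$. Since $f\in\mathscr{D}_{\mathscr{H}}$ is a \emph{finite} linear combination $f=\sum_{k}c_{k}\varphi_{k}$, the sequence $Lf=\big(\sum_{k}c_{k}\left\langle \varphi_{n},\varphi_{k}\right\rangle \big)_{n}$ is a finite combination of the columns $\left(\left\langle \varphi_{n},\varphi_{k}\right\rangle \right)_{n}$, each of which is square-summable by hypothesis (\ref{eq:f2a}). Hence $Lf\in l^{2}$, and $L$ is indeed a densely defined operator $\mathscr{H}\rightarrow l^{2}$; the companion map $M\xi=\sum_{n}\xi_{n}\varphi_{n}$ is automatically a finite sum in $\mathscr{H}$ for $\xi\in\mathscr{D}_{l^{2}}$.

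For $M\subseteq L^{*}$: fix $\xi\in dom\left(M\right)=\mathscr{D}_{l^{2}}$. For every $f\in dom\left(L\right)=\mathscr{D}_{\mathscr{H}}$, the identity (\ref{eq:f5}) reads $\left\langle Lf,\xi\right\rangle _{l^{2}}=\left\langle f,M\xi\right\rangle _{\mathscr{H}}$, which is exactly the defining condition for $\xi\in dom\left(L^{*}\right)$ with $L^{*}\xi=M\xi$. For $L\subseteq M^{*}$: fix $f\in dom\left(L\right)=\mathscr{D}_{\mathscr{H}}$. Taking complex conjugates in (\ref{eq:f5}) gives $\left\langle M\xi,f\right\rangle _{\mathscr{H}}=\left\langle \xi,Lf\right\rangle _{l^{2}}$ for all $\xi\in dom\left(M\right)$, which is precisely the defining condition for $f\in dom\left(M^{*}\right)$ with $M^{*}f=Lf$. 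Translating each conclusion into the language of graphs yields (\ref{eq:f8}), hence (\ref{eq:f7}).

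There is no genuine obstacle here; the lemma is essentially a bookkeeping consequence of (\ref{eq:f5}) once the domains are correctly identified. The only point demanding care is the orientation of the two maps and the attendant complex conjugation—$L$ and $M$ go in opposite directions between $\mathscr{H}$ and $l^{2}$, so one of the two containments uses (\ref{eq:f5}) verbatim while the other uses its conjugate. I would therefore keep explicit track of which inner product ($\left\langle \cdot,\cdot\right\rangle _{\mathscr{H}}$ versus $\left\langle \cdot,\cdot\right\rangle _{l^{2}}$) appears in each step and confirm that the resulting equality matches the adjoint's defining relation on the nose.
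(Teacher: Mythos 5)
Your proof is correct and takes the same route as the paper, which simply declares the lemma ``immediate from (\ref{eq:f5})''; you have written out the details that the paper leaves implicit, namely the well-definedness of $Lf\in l^{2}$ via (\ref{eq:f2a}) and the unwinding of the adjoint's defining relation (with the conjugation handled correctly for the $L\subseteq M^{*}$ direction).
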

\begin{proof}
The conclusion in the lemma is immediate from (\ref{eq:f5}).\end{proof}
\begin{cor}
Both of the operators $L$ and $M$ in (\ref{eq:f3})-(\ref{eq:f4})
are \uline{closable}, i.e., the closure of $G\left(L\right)$ is
a densely defined closed operator $\mathscr{H}\xrightarrow{\;\overline{L}\;}l^{2}$,
and the closure of $G\left(M\right)$ is a densely defined closed
operator $l^{2}\xrightarrow{\;\overline{M}\;}\mathscr{H}$.\end{cor}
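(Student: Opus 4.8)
The plan is to invoke the standard von Neumann criterion (see \cite{DS88b}): a densely defined operator between Hilbert spaces is closable if and only if its adjoint is densely defined, in which case the closure is the operator whose graph is $\overline{G(T)}$. Since Lemma \ref{lem:LM} already places each of $L$ and $M$ inside the adjoint of the other, both adjoints will be seen to have dense domain essentially for free, and the conclusion follows immediately.

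First I would treat $L$. By Lemma \ref{lem:LM} we have $M \subseteq L^*$, so in particular $dom\left(L^*\right) \supseteq dom\left(M\right) = \mathscr{D}_{l^2}$. Since $\mathscr{D}_{l^2} = span\{\varepsilon_n\}$ is dense in $l^2$ (it consists of all finitely supported sequences), the adjoint $L^*$ is densely defined; hence $L$ is closable, and $\overline{L}$ is the single-valued closed operator $\mathscr{H} \xrightarrow{\overline{L}} l^2$ whose graph is $\overline{G(L)}$. Symmetrically, the inclusion $L \subseteq M^*$ from the same lemma gives $dom\left(M^*\right) \supseteq dom\left(L\right) = \mathscr{D}_{\mathscr{H}} = span\{\varphi_n\}$, which is dense in $\mathscr{H}$ precisely because of the standing hypothesis $\{\varphi_n\}^\perp = 0$. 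Thus $M^*$ is densely defined, so $M$ is closable with closure $l^2 \xrightarrow{\overline{M}} \mathscr{H}$.

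Equivalently, and perhaps more transparently, one can argue directly at the level of graphs. Adjoints are always closed, so $G\left(M^*\right)$ and $G\left(L^*\right)$ are closed subspaces that are genuine graphs, i.e., they contain no vector of the form $(0,y)$ with $y \neq 0$. The inclusions $G(L) \subseteq G\left(M^*\right)$ and $G(M) \subseteq G\left(L^*\right)$ of Lemma \ref{lem:LM} then force $\overline{G(L)} \subseteq G\left(M^*\right)$ and $\overline{G(M)} \subseteq G\left(L^*\right)$; since neither ambient graph contains any $(0,y)$ with $y \neq 0$, the same holds for the two closures, so each closure is itself the graph of a (closed, densely defined) operator.

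I expect no genuine obstacle here, as this is a routine consequence of the symmetry relation (\ref{eq:f5}) together with standard adjoint theory. The only point demanding care is the bookkeeping of the two opposite directions $\mathscr{H} \to l^2$ and $l^2 \to \mathscr{H}$, so that the correct density statement is used in each case: $\mathscr{D}_{l^2}$ dense in $l^2$ for the closability of $L$ (automatic), and $\mathscr{D}_{\mathscr{H}}$ dense in $\mathscr{H}$ for the closability of $M$, which is exactly the place where the hypothesis $\{\varphi_n\}^\perp = 0$ is invoked.
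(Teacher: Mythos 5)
Your proof is correct and follows essentially the same route as the paper: both invoke the criterion that a densely defined operator is closable if and only if its adjoint is densely defined, and both obtain the density of $dom\left(L^{*}\right)$ and $dom\left(M^{*}\right)$ directly from the containments $L\subseteq M^{*}$, $M\subseteq L^{*}$ of Lemma \ref{lem:LM}. Your version simply makes explicit the bookkeeping the paper leaves implicit (which dense subspace, $\mathscr{D}_{l^{2}}$ or $\mathscr{D}_{\mathscr{H}}$, serves which operator, and where $\left\{ \varphi_{n}\right\} ^{\perp}=0$ enters), plus an equivalent graph-level restatement.
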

\begin{proof}
Follows from the fact that a densely defined operator $\mathscr{H}_{1}\xrightarrow{\; T\;}\mathscr{H}_{2}$
is closable if and only if the domain of the adjoint $T^{*}$ is dense
in $\mathscr{H}_{2}$. The assertion that both $L^{*}$ and $M^{*}$
have dense domains is immediate from (\ref{eq:f7}).\end{proof}
\begin{lem}
\label{lem:closure}Let $L$ and $M$ be the two operators in Lemma
\ref{lem:LM}, then 
\begin{equation}
\overline{L}=M^{*},\quad\mbox{and}\quad\overline{M}=L^{*}.\label{eq:f10}
\end{equation}
\end{lem}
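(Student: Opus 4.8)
The plan is to establish the two identities in two stages: first record the easy inclusions, then upgrade them to equalities using the essential self-adjointness hypothesis (\ref{eq:f2b}). Since the adjoint operation reverses inclusions and satisfies $(\overline{L})^{*}=L^{*}$ and $(M^{*})^{*}=\overline{M}$ (both valid because $L,M$ are densely defined and closable by the preceding Corollary), the two claims $\overline{L}=M^{*}$ and $\overline{M}=L^{*}$ are mutual adjoints; hence it suffices to prove $\overline{L}=M^{*}$ and then apply the adjoint. For the easy half, Lemma \ref{lem:LM} gives $L\subseteq M^{*}$, and since $M^{*}$ is closed (adjoints are always closed) we immediately obtain $\overline{L}\subseteq M^{*}$.

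The substance is the reverse inclusion $M^{*}\subseteq\overline{L}$, i.e.\ that $\mathscr{D}_{\mathscr{H}}=span\{\varphi_{n}\}$ is a core for $M^{*}$. I would argue this through the geometry of graphs in $\mathscr{H}\oplus l^{2}$. Both $G(\overline{L})$ and $G(M^{*})$ are closed subspaces with $G(\overline{L})\subseteq G(M^{*})$, so equality holds if and only if the orthogonal complement of $G(\overline{L})$ inside $G(M^{*})$ is trivial; and since $G(\overline{L})^{\perp}=G(L)^{\perp}$, this amounts to showing that the only $f\in dom(M^{*})$ with $(f,M^{*}f)\perp(g,Lg)$ for every $g\in\mathscr{D}_{\mathscr{H}}$ is $f=0$.

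The key computation is to test this orthogonality against the generators $g=\varphi_{m}$. Writing out $\langle f,\varphi_{m}\rangle_{\mathscr{H}}+\langle M^{*}f,L\varphi_{m}\rangle_{l^{2}}=0$ and recalling $M^{*}f=(\langle\varphi_{n},f\rangle)_{n}$ together with $L\varphi_{m}=(\langle\varphi_{n},\varphi_{m}\rangle)_{n}$, the relation collapses---after invoking that the Gramian is Hermitian---to $\sum_{j}\langle\varphi_{m},\varphi_{j}\rangle\,\xi_{j}=-\xi_{m}$ for all $m$, where $\xi_{n}:=\langle\varphi_{n},f\rangle$ (note $\xi\in l^{2}$ precisely because $f\in dom(M^{*})$). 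This is exactly the premise of (\ref{eq:f2b}), so that hypothesis forces $\xi=0$, i.e.\ $\langle\varphi_{n},f\rangle=0$ for all $n$; the standing assumption $\{\varphi_{n}\}^{\perp}=0$ then yields $f=0$. Hence the orthogonal complement is trivial and $\overline{L}=M^{*}$, and the adjoint step described above gives $\overline{M}=L^{*}$.

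I expect the main obstacle to be purely bookkeeping: aligning the complex conjugations and the Hermitian symmetry $\overline{\langle\varphi_{n},\varphi_{m}\rangle}=\langle\varphi_{m},\varphi_{n}\rangle$ so that the orthogonality relation reproduces the exact form of the premise in (\ref{eq:f2b}). The role of (\ref{eq:f2a}) is the mild one of guaranteeing $L\varphi_{m}\in l^{2}$, so that the pairing $\langle M^{*}f,L\varphi_{m}\rangle_{l^{2}}$ is a convergent series; reducing the orthogonality test to the single vectors $\varphi_{m}$ requires no limiting argument, since $\mathscr{D}_{\mathscr{H}}$ is their finite linear span. Everything else---closedness of adjoints, the graph splitting, and the adjoint duality between the two identities---is standard von Neumann operator theory.
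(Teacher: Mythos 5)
Your proof is correct and follows essentially the same route as the paper: both reduce (\ref{eq:f10}) to a graph-orthogonality implication whose premise, once the adjoints are unwound via $(M^{*}f)_{j}=\left\langle \varphi_{j},f\right\rangle _{\mathscr{H}}$, is exactly the eigenvalue equation in (\ref{eq:f2b}). The only difference is which side you work on: you prove the $\mathscr{H}$-side implication (the paper's (\ref{eq:f12})), finishing with the standing assumption $\left\{ \varphi_{n}\right\} ^{\perp}=0$, and recover $\overline{M}=L^{*}$ by taking adjoints, whereas the paper proves the mirror $l^{2}$-side implication (\ref{eq:f13}) via semiboundedness/deficiency indices of the Gramian operator and leaves (\ref{eq:f12}) to the symmetry between the two.
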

\begin{proof}
From (\ref{eq:f7}), we automatically get containments:
\begin{equation}
\overline{L}\subseteq M^{*},\quad\mbox{and}\quad\overline{M}\subseteq L^{*},\label{eq:f11}
\end{equation}
but the conclusion (\ref{eq:f10}) in the lemma, is that both containments
(\ref{eq:f11}) are equality. 

Returning to the definitions (\ref{eq:f3}) and (\ref{eq:f4}), we
note that (\ref{eq:f10}) follows if we prove the following two implications:
\begin{equation}
\left\{ \begin{split} & f\in dom(M^{*}),\;\mbox{and}\\
 & \left\langle \varphi_{n},f\right\rangle _{\mathscr{H}}+\sum_{j=1}^{\infty}\left\langle \varphi_{n},\varphi_{j}\right\rangle _{\mathscr{H}}\left(M^{*}f\right)_{j}=0
\end{split}
\right\} \Longrightarrow f=0\;\mbox{in}\;\mathscr{H}.\label{eq:f12}
\end{equation}
And
\begin{equation}
\left\{ \begin{split} & \xi\in dom(L^{*}),\;\mbox{and}\\
 & \sum_{j=1}^{n}\left\langle \varphi_{n},\varphi_{j}\right\rangle _{\mathscr{H}}\xi_{j}=-\xi_{n},\;\forall n\in\mathbb{N}
\end{split}
\right\} \Longrightarrow\xi=0\;\mbox{in}\; l^{2}.\label{eq:f13}
\end{equation}

Using the symmetry (\ref{eq:f11}), we note that either one of the
two implications (\ref{eq:f12}), or (\ref{eq:f13}), implies the
other.

We will prove (\ref{eq:f13}). For this, we make use of assumption
(\ref{eq:f2a}) and (\ref{eq:f2b}) from the premise in Theorem \ref{thm:mspace}.

Here the $\infty\times\infty$ matrix $\left(\left\langle \varphi_{n},\varphi_{j}\right\rangle _{\mathscr{H}}\right)$
defines a Hermitian symmetric operator $T$ with dense domain in $l^{2}$,
\begin{equation}
\left(Tc\right)_{n}:=\sum_{j}\left\langle \varphi_{n},\varphi_{j}\right\rangle _{\mathscr{H}}c_{j},\quad\forall c=\left(c_{j}\right)\in dom\left(T\right).\label{eq:f14}
\end{equation}

Since $T$ is semibounded, i.e., 
\begin{equation}
\left\langle c,Tc\right\rangle _{l^{2}}\geq0,\quad\forall c\in dom\left(T\right),\label{eq:f15}
\end{equation}
it follows from assumption (\ref{eq:f2b}) that $T$ has deficiency
indices $\left(0,0\right)$; and in particular that the implication
(\ref{eq:f13}) holds.

This completes the proof of Lemma \ref{lem:closure}.
\end{proof}

\begin{proof}[Proof of Theorem \ref{thm:mspace}, continued.]
 We conclude from the lemmas that $L_{1}:=\overline{L}$ (closure)
is a closed operator with dense domain in $\mathscr{H}$, and $\mathscr{H}\xrightarrow{\;\overline{L}\;}l^{2}$. 

By a theorem of von Neumann \cite{DS88b}, we further conclude that
the two operators $L^{*}L_{1}$, and $L_{1}L^{*}$, are both selfadjoint
(s.a.); the first in $\mathscr{H}$, and the second in $l^{2}$ (not
merely Hermitian symmetric), but both have deficiency indices $\left(0,0\right)$,
and both are semibounded, i.e., with spectrum in $\left[0,\infty\right]$. 

Since $L^{*}L_{1}$ is s.a. with dense domain in $\mathscr{H}$, it
has a unique spectral resolution in the form of a projection valued
measure $P\left(\cdot\right)$, i.e., 
\begin{enumerate}
\item $P\left(A\right)=P\left(A\right)^{*}=P\left(A\right)^{2}$, $\forall A\in\mathscr{B}:=\mathscr{B}\left([0,\infty)\right)$; 
\item $P\left(A\cap B\right)=P\left(A\right)P\left(B\right)$, $A,B\in\mathscr{B}$; 
\item $P\left(\cdot\right)$ is sigma-additive; 
\item $P\left([0,\infty)\right)=I_{\mathscr{H}}$, the identity operator
in $\mathscr{H}$; and
\item $L^{*}L_{1}=\int_{0}^{\infty}\lambda P\left(d\lambda\right)$. 
\end{enumerate}

More generally, if $\varphi$ is a Borel function, then by functional
calculus the operator $\varphi\left(L^{*}L_{1}\right)$ is: 
\begin{equation}
\varphi\left(L^{*}L_{1}\right)=\int_{0}^{\infty}\varphi\left(\lambda\right)P\left(d\lambda\right);\label{eq:g2}
\end{equation}
and the following are equivalent:
\begin{equation}
f\in dom\left(\varphi\left(L^{*}L_{1}\right)\right)\Longleftrightarrow\int_{0}^{\infty}\left|\varphi\left(\lambda\right)\right|^{2}\left\Vert P\left(d\lambda\right)f\right\Vert _{\mathscr{H}}^{2}<\infty.\label{eq:g3}
\end{equation}

If $\alpha=\left[a,b\right]$, $0<a\leq b<\infty$, i.e., a fixed
compact interval in $\left(0,\infty\right)$, set 
\begin{equation}
\mathscr{H}_{\alpha}=P\left(\alpha\right)\mathscr{H}.\label{eq:g4}
\end{equation}
Setting function $\varphi\left(\lambda\right)=\lambda\chi_{\alpha}\left(\lambda\right)$
in (\ref{eq:g2}), we get
\begin{equation}
a\left\Vert f\right\Vert _{\mathscr{H}}^{2}\leq\left\langle f,L^{*}L_{1}f\right\rangle _{\mathscr{H}}\leq b\left\Vert f\right\Vert _{\mathscr{H}}^{2},\quad\forall f\in\mathscr{H}_{\alpha}.\label{eq:g5}
\end{equation}
Since $\left\langle f,L^{*}L_{1}f\right\rangle _{\mathscr{H}}=\sum_{n=1}^{\infty}\left|\left\langle \varphi_{n},f\right\rangle _{\mathscr{H}}\right|^{2}$,
it follows that (\ref{eq:g5}) is the desired frame-estimate, i.e.,
\begin{equation}
a\left\Vert f\right\Vert _{\mathscr{H}}^{2}\leq\sum_{n=1}^{\infty}\left|\left\langle \varphi_{n},f\right\rangle _{\mathscr{H}}\right|^{2}\leq b\left\Vert f\right\Vert _{\mathscr{H}}^{2},\quad\forall f\in\mathscr{H}_{\alpha}.\label{eq:g6}
\end{equation}

Moreover, if $\mathscr{K}\subset\mathscr{H}$ is a closed subspace
in $\mathscr{H}$ s.t. (\ref{eq:g6}) holds for $\forall f\in\mathscr{K}$,
then it follows from (\ref{eq:g2})-(\ref{eq:g4}) that $\mathscr{K}\subset\mathscr{H}_{\alpha}$,
$\alpha=\left[a,b\right]$. This concludes the proof of the theorem.
\end{proof}
The following result is implied by Lemma \ref{lem:closure}, and is
of independent interest:
\begin{prop}
Let $\left(\mathscr{H},\left\{ \varphi_{n}\right\} _{n\in\mathbb{N}}\right)$
satisfy the conditions (\ref{eq:f2a})-(\ref{eq:f2b}) in Theorem
\ref{thm:mspace}, and let 
\begin{equation}
L_{1}=U\left(L^{*}L_{1}\right)^{\frac{1}{2}}=\left(L_{1}L^{*}\right)^{\frac{1}{2}}U\label{eq:g7}
\end{equation}
be the corresponding polar decomposition of $\mathscr{H}\xrightarrow{\; L_{1}\;}l^{2}$,
then 
\begin{equation}
U=T_{G}^{-\frac{1}{2}}L_{1},\;\mbox{and}\label{eq:g8}
\end{equation}
$U$ is an isometry of $\mathscr{H}$ into $l^{2}$.\end{prop}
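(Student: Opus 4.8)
The plan is to read $U$ off the two factorizations in \eqref{eq:g7} and to identify the right-hand positive factor $(L_1L^*)^{1/2}$ with $T_G^{1/2}$. Recall that for the closed, densely defined operator $\mathscr{H}\xrightarrow{\;L_1\;}l^2$ the polar decomposition produces a partial isometry $U$ with initial space $(\ker L_1)^{\perp}\subseteq\mathscr{H}$ and final space $\overline{\mathrm{ran}\,L_1}\subseteq l^2$, together with the two identities $L_1=U(L^*L_1)^{1/2}=(L_1L^*)^{1/2}U$, whose self-adjoint semibounded factors were already produced in the proof of Theorem \ref{thm:mspace}. Two things then need verification: that $U$ is actually a \emph{global} isometry (its initial space is all of $\mathscr{H}$), and that the second factorization can be inverted to give \eqref{eq:g8}.

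For the isometry claim I would compute $\ker L_1$. By Lemma \ref{lem:closure} we have $L_1=\overline{L}=M^*$, so $\ker L_1=\ker M^*=(\mathrm{ran}\,M)^{\perp}$. Since $\mathrm{ran}\,M=\mathrm{span}\{\varphi_n\}$ is dense in $\mathscr{H}$ by the standing hypothesis $\{\varphi_n\}^{\perp}=\{0\}$, we conclude $\ker L_1=\{0\}$. Hence the initial space $(\ker L_1)^{\perp}$ of the partial isometry $U$ is all of $\mathscr{H}$, and therefore $\|Uf\|_{l^2}=\|f\|_{\mathscr{H}}$ for every $f\in\mathscr{H}$; that is, $U$ is an isometry of $\mathscr{H}$ into $l^2$.

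Next I would pin down the left positive factor. On a finitely supported $\xi\in\mathscr{D}_{l^2}$ we have $L^*\xi=M\xi=\sum_n\xi_n\varphi_n\in\mathscr{D}_{\mathscr{H}}$, so $(L_1L^*\xi)_m=\langle\varphi_m,\sum_n\xi_n\varphi_n\rangle=\sum_n\langle\varphi_m,\varphi_n\rangle\xi_n=(T_G\xi)_m$, with $T_G$ as in \eqref{eq:m4}. Thus the self-adjoint operator $L_1L^*$ extends the Gramian operator $T_G$ from its core of finite sequences; but hypothesis \eqref{eq:f2b} forces $T_G$ to be essentially self-adjoint, so its self-adjoint extension is unique and $L_1L^*=\overline{T_G}$. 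Consequently $(L_1L^*)^{1/2}=T_G^{1/2}$, and the second form in \eqref{eq:g7} reads $L_1=T_G^{1/2}U$.

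It remains to invert $L_1=T_G^{1/2}U$, and this is the step that will require the most care. Since $T_G\ge 0$ is self-adjoint, $T_G^{1/2}$ is injective on $(\ker T_G)^{\perp}=\overline{\mathrm{ran}\,T_G^{1/2}}$, and its inverse $T_G^{-1/2}$ is a (generally unbounded) self-adjoint operator there. The final space of $U$ equals $\overline{\mathrm{ran}\,L_1}=\overline{\mathrm{ran}\,T_G^{1/2}}$, so $Uf$ always lands in the subspace on which $T_G^{-1/2}$ is defined; for $f\in\mathrm{dom}(L_1)$ the intertwining $U\,\mathrm{dom}(L^*L_1)^{1/2}\subseteq\mathrm{dom}(L_1L^*)^{1/2}$ gives $Uf\in\mathrm{dom}(T_G^{1/2})$, whence $L_1f=T_G^{1/2}Uf\in\mathrm{ran}\,T_G^{1/2}=\mathrm{dom}(T_G^{-1/2})$ and $T_G^{-1/2}L_1f=Uf$, which is \eqref{eq:g8}. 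The main obstacle is exactly this domain bookkeeping for the unbounded inverse $T_G^{-1/2}$: one must confirm that $L_1f$ lies in $\mathrm{ran}\,T_G^{1/2}$ so that the cancellation $T_G^{-1/2}T_G^{1/2}U=U$ is legitimate rather than merely formal, and then note that both sides agree on the dense domain $\mathrm{dom}(L_1)$, with $U$ the bounded (isometric) extension.
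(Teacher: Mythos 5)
Your proof is correct, and on the part the paper actually argues it follows the same route: the paper's entire printed proof is the observation that, by the polar decomposition, the initial space of $U$ is $\mathscr{H}\ominus\ker\left(L_{1}\right)$, together with $\ker\left(L_{1}\right)=\left\{ \varphi_{n}\right\} ^{\perp}=0$ (citing (\ref{eq:f3}) and (\ref{eq:m2})); your version of this step via $L_{1}=M^{*}$ and $\ker M^{*}=\left(\mathrm{ran}\, M\right)^{\perp}$ from Lemma \ref{lem:closure} is the same computation. Where you genuinely go beyond the paper is the formula (\ref{eq:g8}): the paper offers no argument for it, treating it as a formal consequence of (\ref{eq:g7}), whereas you supply the two ingredients that make it rigorous --- first, the identification $L_{1}L^{*}=\overline{T_{G}}$, which you correctly obtain by checking $T_{G}\subseteq L_{1}L^{*}$ on finite sequences and invoking essential self-adjointness of $T_{G}$ (hypothesis (\ref{eq:f2b})) so that the self-adjoint operator $L_{1}L^{*}$ can only be the closure; and second, the domain bookkeeping needed to invert $L_{1}=T_{G}^{1/2}U$ when $T_{G}^{-1/2}$ is unbounded and $T_{G}$ possibly has nontrivial kernel, using that $Uf$ lies in the final space $\overline{\mathrm{ran}\, L_{1}}=\left(\ker T_{G}\right)^{\perp}$ so that the cancellation $T_{G}^{-1/2}T_{G}^{1/2}Uf=Uf$ is legitimate on $\mathrm{dom}\left(L_{1}\right)$, with $U$ then determined by boundedness. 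Both ingredients are sound, so your write-up is in effect a more complete proof of the proposition than the one printed in the paper.
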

\begin{proof}
By the properties of the polar decomposition (\ref{eq:g7}), $U$
is a partial isometry with initial space = $\mathscr{H}\ominus\ker\left(L_{1}\right)$;
so we only need to prove that $\ker\left(L_{1}\right)=0$. But from
(\ref{eq:f3}) we get $\ker\left(L_{1}\right)=\left\{ \varphi_{n}\right\} ^{\perp}=0$,
by (\ref{eq:m2}).\end{proof}
\begin{defn}
Let $\Omega$ be a set, and let $\mathscr{H}$ be a Hilbert space
of functions on $\Omega$. We say that $\mathscr{H}$ is a \emph{reproducing
kernel Hilbert} space (RKHS) iff (Def): $\mathscr{H}\ni f\longmapsto f\left(t\right)\in\mathbb{C}$
is norm-continuous for all $t\in\Omega$; or, equivalent, if for all
$t\in\Omega$, there exists a $K_{t}\in\mathscr{H}$ s.t. 
\begin{equation}
f\left(t\right)=\left\langle K_{t},f\right\rangle _{\mathscr{H}},\quad\forall f\in\mathscr{H}.\label{eq:e0}
\end{equation}
We say that (\ref{eq:e0}) is the reproducing property. (See, e.g.,
\cite{Aro43,Aro48}.)\end{defn}
\begin{thm}
\label{thm:rkhs}Let $\mathscr{H}$ be a Hilbert space, and $\left\{ \varphi_{n}\right\} _{n\in\mathbb{N}}$
be a \uline{frame} with frame bounds $a,b$, where 
\begin{equation}
0<a\leq b<\infty.\label{eq:e1}
\end{equation}
Suppose further that there is a set $\Omega$ s.t. each $\varphi_{n}$
is a function on $\Omega$, and that 
\begin{equation}
\left(\varphi_{n}\left(t\right)\right)_{n\in\mathbb{N}}\in l^{2},\quad\forall t\in\Omega.\label{eq:e2}
\end{equation}
Then $\mathscr{H}$ is a reproducing kernel Hilbert space (RKHS) of
functions on $\Omega$. \end{thm}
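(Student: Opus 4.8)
The plan is to verify the reproducing property (\ref{eq:e0}) directly, by producing, for each $t\in\Omega$, an explicit vector $K_{t}\in\mathscr{H}$ with $f(t)=\left\langle K_{t},f\right\rangle _{\mathscr{H}}$; equivalently, I will show that each point-evaluation functional $E_{t}:f\longmapsto f(t)$ is bounded on $\mathscr{H}$. The starting point is the standard frame machinery: since $\left\{ \varphi_{n}\right\} $ is a frame with bounds $a,b$ as in (\ref{eq:e1}), the frame operator $Sf=\sum_{n}\left\langle \varphi_{n},f\right\rangle _{\mathscr{H}}\varphi_{n}$ is bounded, selfadjoint and invertible with $aI\leq S\leq bI$. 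Writing $\psi_{n}:=S^{-1}\varphi_{n}$ for the canonical dual frame (which has upper frame bound $1/a$), one has the norm-convergent reconstruction $f=\sum_{n}\left\langle \psi_{n},f\right\rangle _{\mathscr{H}}\varphi_{n}$ for every $f\in\mathscr{H}$.

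Next I would use hypothesis (\ref{eq:e2}) to build the candidate kernel. For fixed $t$, set $\eta_{t}:=\left(\varphi_{n}(t)\right)_{n}$, which lies in $l^{2}$ by (\ref{eq:e2}); write $M_{t}:=\left\Vert \eta_{t}\right\Vert _{l^{2}}^{2}=\sum_{n}\left|\varphi_{n}(t)\right|^{2}<\infty$. Because $\left\{ \psi_{n}\right\} $ is a frame it is in particular a Bessel sequence, so its synthesis operator maps $l^{2}$ boundedly into $\mathscr{H}$; hence the series $K_{t}:=\sum_{n}\overline{\varphi_{n}(t)}\,\psi_{n}$ converges in $\mathscr{H}$ and defines an element $K_{t}\in\mathscr{H}$. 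Pairing against $f$ and using the dual-frame upper bound together with Cauchy--Schwarz gives $\left|\left\langle K_{t},f\right\rangle \right|=\left|\sum_{n}\varphi_{n}(t)\left\langle \psi_{n},f\right\rangle \right|\leq\sqrt{M_{t}/a}\,\left\Vert f\right\Vert $, so $f\mapsto\left\langle K_{t},f\right\rangle $ is automatically a bounded functional.

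It then remains to identify this functional with $E_{t}$, i.e.\ to prove $\left\langle K_{t},f\right\rangle _{\mathscr{H}}=f(t)$ for all $f$. Expanding the inner product, $\left\langle K_{t},f\right\rangle =\sum_{n}\varphi_{n}(t)\left\langle \psi_{n},f\right\rangle $, which is precisely the reconstruction series $f=\sum_{n}\left\langle \psi_{n},f\right\rangle \varphi_{n}$ read off term-by-term at $t$. On the dense subspace $\mathscr{D}=\mathrm{span}\left\{ \varphi_{m}\right\} $ the identity is transparent, since for a finite combination the value at $t$ is literally the finite sum of the values; one checks $\left\langle K_{t},\varphi_{m}\right\rangle =\varphi_{m}(t)$ and extends by linearity. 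I would then pass from $\mathscr{D}$ to all of $\mathscr{H}$ using that $\left\langle K_{t},\cdot\right\rangle $ is continuous (previous paragraph) and that $\mathscr{D}$ is dense (a frame is complete). Once $\left\langle K_{t},f\right\rangle =f(t)$ holds for every $f$ and every $t$, the reproducing property (\ref{eq:e0}) is established, the evaluations are bounded, and $\mathscr{H}$ is a RKHS with kernel $K(s,t)=\left\langle K_{s},K_{t}\right\rangle =\sum_{n}\psi_{n}(s)\,\overline{\varphi_{n}(t)}$.

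The crux, and the step I expect to be the main obstacle, is exactly the last identification: reconciling the $\mathscr{H}$-norm convergence of the reconstruction with its pointwise value, i.e.\ justifying $f(t)=\sum_{n}\left\langle \psi_{n},f\right\rangle \varphi_{n}(t)$ for arbitrary $f$ rather than only on $\mathscr{D}$. This is where hypothesis (\ref{eq:e2}) is indispensable: it is precisely the square-summability of $\left(\varphi_{n}(t)\right)_{n}$ that makes the right-hand side an absolutely convergent, and hence well-defined and continuous, functional of the coefficient sequence, so that the limit of the norm-convergent partial sums can legitimately be matched with the limit of their pointwise values (equivalently, this is what realizes the abstract vectors of $\mathscr{H}$ as honest functions on $\Omega$). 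Without (\ref{eq:e2}) the candidate $K_{t}$ would fail to lie in $\mathscr{H}$ and point-evaluation could be unbounded, so the entire argument turns on this summability condition.
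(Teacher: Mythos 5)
Your proof is correct and follows essentially the same route as the paper's: both rest on the invertibility of the frame operator given by the frame bounds (\ref{eq:e1}), use hypothesis (\ref{eq:e2}) to synthesize the kernel vector $K_{t}$ in $\mathscr{H}$ from the $l^{2}$-sequence $\left(\varphi_{n}\left(t\right)\right)_{n}$, and derive the reproducing property (\ref{eq:e0}) from the frame reconstruction formula, glossing over the identification of abstract vectors with functions in the same way (via the absolute convergence that (\ref{eq:e2}) provides). The only difference is cosmetic: you work with the canonical dual frame $S^{-1}\varphi_{n}$, while the paper works with the Parseval frame $\left(L^{*}L\right)^{-\frac{1}{2}}\varphi_{n}$ and thus writes the (same) kernel in the symmetric form $K^{G}\left(s,t\right)=\left\langle l\left(s\right),G^{-1}l\left(t\right)\right\rangle _{2}$.
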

\begin{proof}
By (\ref{eq:e1})\&(\ref{eq:e2}) we conclude that
\begin{equation}
\sum_{n\in\mathbb{N}}\varphi_{n}\left(t\right)\varphi_{n}\in\mathscr{H},\label{eq:e3}
\end{equation}
and that 
\begin{equation}
a\sum_{n}\left|\varphi_{n}\left(t\right)\right|^{2}\leq\left\Vert \sum_{n=1}^{\infty}\varphi_{n}\left(t\right)\varphi_{n}\right\Vert _{\mathscr{H}}^{2}\leq b\sum_{n}\left|\varphi_{n}\left(t\right)\right|^{2}\label{eq:e4}
\end{equation}
holds for all $t\in\Omega$.

For $t\in\Omega$, set 
\begin{equation}
l\left(t\right)=\begin{bmatrix}\varphi_{1}\left(t\right)\\
\varphi_{2}\left(t\right)\\
\vdots\\
\vdots
\end{bmatrix}\in l^{2}\label{eq:e5}
\end{equation}
and set 
\begin{eqnarray}
K^{G}\left(s,t\right) & := & \left\langle l\left(s\right),G^{-1}l\left(t\right)\right\rangle _{2}\label{eq:e6}\\
 & = & \underset{\left(n,m\right)\in\mathbb{N}\times\mathbb{N}}{\sum\sum}\overline{\varphi_{n}\left(s\right)}\left(G^{-1}\right)_{nm}\varphi_{m}\left(t\right).\nonumber 
\end{eqnarray}
We claim that $K^{G}\left(\cdot,\cdot\right)$ in (\ref{eq:e6}) turns
$\mathscr{H}$ into a RKHS. 

To see this, set 
\begin{eqnarray}
Lf & = & \left(\left\langle \varphi_{j},f\right\rangle _{\mathscr{H}}\right)_{j\in\mathbb{N}},\;\mbox{and}\label{eq:e7}\\
L^{*}c & = & \sum_{j=1}^{\infty}c_{j}\varphi_{j}\label{eq:e8}
\end{eqnarray}
then by standard frame theory $L^{*}L$ and $LL^{*}$ are both bounded
and selfadjoint. Moreover, $LL^{*}=T_{G}=$ the operator in $l^{2}$
defined from the Gramian matrix 
\begin{equation}
G=\left(\left\langle \varphi_{i},\varphi_{j}\right\rangle _{\mathscr{H}}\right)_{i,j=1}^{\infty}.\label{eq:e9a}
\end{equation}
Since $a,b$ are the frame bounds, we get:
\begin{align}
aI_{\mathscr{H}} & \leq L^{*}L\leq bI_{\mathscr{H}},\;\mbox{and}\label{eq:e9b}\\
aI_{l^{2}} & \leq LL^{*}\leq bI_{l^{2}}\label{eq:e10}
\end{align}
where $I$ denotes the respective identity operators in $\mathscr{H}$,
and in $l^{2}$; and where ``$\leq$'' denotes the natural ordering
of Hermitian (bounded) operators. It further follows that the system,
\begin{equation}
\psi_{n}:=\left(L^{*}L\right)^{-\frac{1}{2}}\varphi_{n},\quad n\in\mathbb{N}
\end{equation}
is a \emph{Parseval} frame in $\mathscr{H}$, i.e., 
\begin{equation}
\left\Vert f\right\Vert _{\mathscr{H}}^{2}=\sum_{n=1}^{\infty}\left|\left\langle \psi_{n},f\right\rangle _{\mathscr{H}}\right|^{2}\label{eq:e12}
\end{equation}
holds for all $f\in\mathscr{H}$. 

Using (\ref{eq:e3}), we see that every $f\in\mathscr{H}$ identifies
with a function on $\Omega$; and we shall use the same notation for
this function representations. By (\ref{eq:e12}), 
\begin{equation}
f=\sum_{n=1}^{\infty}\left\langle \psi_{n},f\right\rangle _{\mathscr{H}}\psi_{n}\label{eq:e13}
\end{equation}
holds for $f\in\mathscr{H}$, with norm-convergence. Hence (using
the function-representation):
\begin{eqnarray*}
f\left(t\right) & = & \sum_{n=1}^{\infty}\left\langle \psi_{n},f\right\rangle _{\mathscr{H}}\psi_{n}\left(t\right)\\
 & = & \left\langle \sum_{n=1}^{\infty}\overline{\psi_{n}\left(\cdot\right)}\psi_{n},f\right\rangle _{\mathscr{H}}\\
 & = & \left\langle \sum_{n=1}^{\infty}\overline{\left(\left(L^{*}L\right)^{-\frac{1}{2}}\varphi_{n}\right)}\left(\cdot\right)\left(\left(L^{*}L\right)^{-\frac{1}{2}}\varphi_{n}\right)\left(t\right),f\right\rangle _{\mathscr{H}}\\
 & = & \left\langle \left\langle T_{G}^{-\frac{1}{2}}l\left(\cdot\right),T_{G}^{-\frac{1}{2}}l\left(t\right)\right\rangle _{2},f\right\rangle _{\mathscr{H}}\\
 & = & \left\langle K^{G}\left(\cdot,t\right),f\right\rangle _{\mathscr{H}},
\end{eqnarray*}
valid for $\forall t\in\Omega$, and all $f\in\mathscr{H}$. This
is the desired RKHS-property for $\mathscr{H}$. 
\end{proof}

\section{Applications}

\subsection{Hilbert matrix}

In Theorem \ref{thm:rkhs} we assume that the given Hilbert space
$\mathscr{H}$ has a frame $\left\{ \varphi_{n}\right\} \subset\mathscr{K}$
consisting of functions on a set $\Omega$. So this entails both a
lower, and an upper bound, i.e., $0<B_{1}\leq B_{2}<\infty$. The
following example shows that the conclusion in the theorem is false
if there is \emph{not} a positive lower frame bound.

Set $\mathscr{H}=L^{2}\left(0,1\right)$, $\Omega=\left(0,1\right)$,
the open unit-interval, and $\varphi_{n}\left(t\right)=t^{n}$, $n\in\left\{ 0\right\} \cup\mathbb{N}=\mathbb{N}_{0}$.
In this case, the Gramian 
\begin{equation}
G_{n,m}=\int_{0}^{1}x^{n+m}dx=\frac{1}{n+m+1}
\end{equation}
is the $\infty\times\infty$ Hilbert matrix, see \cite{Ros58,Kat57,Tau49}.
In this case it is known that there is an upper frame bound $B_{2}=\pi$,
i.e., 
\begin{equation}
\sum_{n=0}^{\infty}\left|\int_{0}^{1}f\left(x\right)x^{n}dx\right|^{2}\leq\pi\int_{0}^{1}\left|f\left(x\right)\right|^{2}dx;
\end{equation}
in fact, for the operator-norm, we have $\left\Vert G\right\Vert _{l^{2}\rightarrow l^{2}}=\pi$. 

Moreover, $G$ defines a selfadjoint operator in $l^{2}\left(\mathbb{N}_{0}\right)$
with spectrum $\left[0,\pi\right]$, the closed interval. This implies
that there cannot be a positive lower frame-bound.

Further, it is immediate by inspection that $\mathscr{H}=L^{2}\left(0,1\right)$
is \emph{not} a RKHS.

\subsection{Random fields}
\begin{defn}[see \cite{MR2509983}]
Let $V$ be a countable discrete set (for example the vertex-set
in an infinite graph \cite{MR3195830,MR3203830}.) Let $\left(\Omega,\mathscr{F},P\right)$
be a probability space, and let $\mathscr{H}:=L^{2}\left(\Omega,P\right)$
be the corresponding Hilbert space of $L^{2}$-random variables, i.e.,
$f:\Omega\rightarrow\mathbb{R}$, measurable s.t. 
\begin{equation}
\mathbb{E}(\left|f\right|^{2})=\int_{\Omega}\left|f\right|^{2}dP<\infty.
\end{equation}
A system $\left(\varphi_{x}\right)_{x\in V}\subset\mathscr{H}$ is
said to be a \emph{random frame} if
\begin{equation}
\sum_{y\in V}\left|\mathbb{E}\left(\varphi_{x}\varphi_{y}\right)\right|^{2}<\infty,\quad\forall x\in V;\label{eq:r1}
\end{equation}
and the following implication holds: 
\begin{equation}
\Big[\sum_{y\in V}\mathbb{E}\left(\varphi_{x}\varphi_{y}\right)c_{y}=-c_{x},\;\left(c_{x}\right)\in l^{2}\left(V\right)\Big]\Longrightarrow\left(c_{x}\right)=0\;\mbox{in}\; l^{2}\left(V\right).\label{eq:r2}
\end{equation}
\end{defn}
\begin{cor}
Let $\left(\Omega,\mathscr{F},P\right)$ be a probability space and
let $\left(\varphi_{x}\right)_{x\in V}$ be a random frame in $L^{2}\left(\Omega,P\right)$.
Then 
\begin{enumerate}
\item \label{enu:r1}the \uline{generalized frame operator}
\begin{equation}
L^{*}L_{1}f=\sum_{x\in V}\mathbb{E}\left(\varphi_{x}f\right)\varphi_{x}\label{eq:rr1}
\end{equation}
is selfadjoint in $L^{2}\left(\Omega,P\right)$, generally unbounded;
and 
\item Let $a,b\in\mathbb{R}_{+}$ satisfying $0<a\leq b<\infty$, set $\alpha=\left[a,b\right]$.
Let $P_{E}\left(\cdot\right)$ be the projection valued measure of
$L^{*}L_{1}$, and $\mathscr{H}\left(\alpha\right)=P_{E}\left(\alpha\right)\mathscr{H}$,
then the estimate
\begin{equation}
a\:\mathbb{E}(\left|f\right|^{2})\leq\sum_{x\in V}\left|\mathbb{E}\left(\varphi_{x}f\right)\right|^{2}\leq b\:\mathbb{E}(\left|f\right|^{2})\label{eq:rr2}
\end{equation}
holds for all $f\in\mathscr{H}\left(\alpha\right)$. (i.e., If $P_{E}\left(\cdot\right)$
denotes the spectral resolution of the operator $L^{*}L_{1}$ in (\ref{enu:r1}),
we may take $\mathscr{H}\left(\alpha\right)=P_{E}\left(\alpha\right)\mathscr{H}\left(=P_{E}\left(\left[a,b\right]\right)\mathscr{H}\right)$
in (\ref{eq:rr2}).)
\end{enumerate}
\end{cor}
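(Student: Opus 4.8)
The plan is to read this corollary as the specialization of Theorem \ref{thm:mspace} to the concrete Hilbert space $\mathscr{H}=L^{2}\left(\Omega,P\right)$, with the abstract index set $\mathbb{N}$ replaced by the countable discrete set $V$. First I would fix the dictionary: since the random variables are real-valued, the inner product is $\left\langle \varphi_{x},\varphi_{y}\right\rangle _{\mathscr{H}}=\mathbb{E}\left(\varphi_{x}\varphi_{y}\right)$, so the summability condition (\ref{eq:r1}) in the definition of a random frame is precisely hypothesis (\ref{eq:f2a}), and the uniqueness condition (\ref{eq:r2}) is precisely hypothesis (\ref{eq:f2b}). If the definition of random frame is not taken to include $\left\{ \varphi_{x}\right\} ^{\perp}=0$, I would either impose it as a standing assumption or pass to the closed span $\overline{span}\left\{ \varphi_{x}\right\} _{x\in V}$, on which density holds by construction; with this, all hypotheses of Theorem \ref{thm:mspace} are in force.

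Next I would import the two operators built in the proof of that theorem, now written in probabilistic notation: the analysis operator $Lf=\left(\mathbb{E}\left(\varphi_{x}f\right)\right)_{x\in V}$ defined on $span\left\{ \varphi_{x}\right\}$, and the synthesis operator $M\xi=\sum_{x}\xi_{x}\varphi_{x}$ defined on finitely supported sequences. By Lemma \ref{lem:closure} we have $\overline{L}=M^{*}$ and $\overline{M}=L^{*}$, and by the von Neumann theorem quoted there the operator $L^{*}L_{1}$, with $L_{1}=\overline{L}$, is selfadjoint in $L^{2}\left(\Omega,P\right)$, semibounded with spectrum in $\left[0,\infty\right)$, and generally unbounded. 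This is part (\ref{enu:r1}). To recover the explicit formula (\ref{eq:rr1}), I would compute on a vector $f$ in the domain: $L_{1}f=\left(\mathbb{E}\left(\varphi_{x}f\right)\right)_{x}\in l^{2}$, and then $L^{*}=\overline{M}$ synthesizes it, giving $L^{*}L_{1}f=\sum_{x}\mathbb{E}\left(\varphi_{x}f\right)\varphi_{x}$ with convergence in $\mathscr{H}$.

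For part (2) I would invoke the spectral resolution $L^{*}L_{1}=\int_{0}^{\infty}\lambda\,P_{E}\left(d\lambda\right)$ and restrict to $\mathscr{H}\left(\alpha\right)=P_{E}\left(\alpha\right)\mathscr{H}$ with $\alpha=\left[a,b\right]$. Exactly as in the derivation of (\ref{eq:g5})--(\ref{eq:g6}), applying the functional calculus with $\varphi\left(\lambda\right)=\lambda\chi_{\alpha}\left(\lambda\right)$ sandwiches the quadratic form, yielding $a\left\Vert f\right\Vert ^{2}\leq\left\langle f,L^{*}L_{1}f\right\rangle \leq b\left\Vert f\right\Vert ^{2}$ for every $f\in\mathscr{H}\left(\alpha\right)$. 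Translating back through $\left\Vert f\right\Vert _{\mathscr{H}}^{2}=\mathbb{E}\left(\left|f\right|^{2}\right)$ and the identity $\left\langle f,L^{*}L_{1}f\right\rangle =\left\Vert L_{1}f\right\Vert _{l^{2}}^{2}=\sum_{x\in V}\left|\mathbb{E}\left(\varphi_{x}f\right)\right|^{2}$ then gives (\ref{eq:rr2}).

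The computations are routine once the dictionary is fixed; the point that needs care is the domain bookkeeping for the unbounded operator. The identity $\left\langle f,L^{*}L_{1}f\right\rangle =\sum_{x}\left|\mathbb{E}\left(\varphi_{x}f\right)\right|^{2}$ relates two a priori possibly infinite quantities and requires $f\in dom\left(L^{*}L_{1}\right)$; this is exactly where restricting to the spectral subspace $\mathscr{H}\left(\alpha\right)$ --- on which $L^{*}L_{1}$ is bounded with norm confined to $\left[a,b\right]$ --- does the work, guaranteeing finiteness and sharpness of the two-sided bound simultaneously. I expect this finiteness/domain issue, together with the implicit density of the span, to be the only genuine obstacles.
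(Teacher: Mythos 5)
Your proposal is correct and is exactly the argument the paper intends: the corollary is stated without a separate proof precisely because it is the specialization of Theorem \ref{thm:mspace} (and the operators $L$, $M$, $L_{1}=\overline{L}$ and the spectral resolution of $L^{*}L_{1}$ from its proof) to $\mathscr{H}=L^{2}\left(\Omega,P\right)$ with index set $V$ and $\left\langle \varphi_{x},\varphi_{y}\right\rangle _{\mathscr{H}}=\mathbb{E}\left(\varphi_{x}\varphi_{y}\right)$. Your explicit handling of the density assumption $\left\{ \varphi_{x}\right\} ^{\perp}=0$ (which the paper's definition of random frame omits but its theorem requires) and of the domain bookkeeping is a welcome tightening, not a deviation.
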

\begin{acknowledgement*}
The co-authors thank the following colleagues for helpful and enlightening
discussions: Professors Daniel Alpay, Sergii Bezuglyi, Ilwoo Cho,
Ka Sing Lau, Paul Muhly, Myung-Sin Song, Wayne Polyzou, Keri Kornelson,
and members in the Math Physics seminar at the University of Iowa.

\bibliographystyle{amsalpha}
\bibliography{ref}

\newcommand{\etalchar}[1]{$^{#1}$}
\providecommand{\bysame}{\leavevmode\hbox to3em{\hrulefill}\thinspace}
\providecommand{\MR}{\relax\ifhmode\unskip\space\fi MR }
\providecommand{\MRhref}[2]{%
  \href{http://www.ams.org/mathscinet-getitem?mr=#1}{#2}
}
\providecommand{\href}[2]{#2}
\begin{thebibliography}{ABDdS93}

\bibitem[ABDdS93]{ABDdS93}
Daniel Alpay, Vladimir Bolotnikov, Aad Dijksma, and Henk de~Snoo, \emph{On some
  operator colligations and associated reproducing kernel {H}ilbert spaces},
  Operator extensions, interpolation of functions and related topics ({T}imi\c
  soara, 1992), Oper. Theory Adv. Appl., vol.~61, Birkh\"auser, Basel, 1993,
  pp.~1--27. \MR{1246577 (94i:47018)}

\bibitem[AD92]{AD92}
Daniel Alpay and Harry Dym, \emph{On reproducing kernel spaces, the {S}chur
  algorithm, and interpolation in a general class of domains}, Operator theory
  and complex analysis ({S}apporo, 1991), Oper. Theory Adv. Appl., vol.~59,
  Birkh\"auser, Basel, 1992, pp.~30--77. \MR{1246809 (94j:46034)}

\bibitem[AD93]{AD93}
\bysame, \emph{On a new class of structured reproducing kernel spaces}, J.
  Funct. Anal. \textbf{111} (1993), no.~1, 1--28. \MR{1200633 (94g:46035)}

\bibitem[AJSV13]{AJSV13}
Daniel Alpay, Palle Jorgensen, Ron Seager, and Dan Volok, \emph{On discrete
  analytic functions: products, rational functions and reproducing kernels}, J.
  Appl. Math. Comput. \textbf{41} (2013), no.~1-2, 393--426. \MR{3017129}

\bibitem[AJV14]{AJV14}
Daniel Alpay, Palle Jorgensen, and Dan Volok, \emph{Relative reproducing kernel
  {H}ilbert spaces}, Proc. Amer. Math. Soc. \textbf{142} (2014), no.~11,
  3889--3895. \MR{3251728}

\bibitem[Aro43]{Aro43}
N.~Aronszajn, \emph{La th\'eorie des noyaux reproduisants et ses applications.
  {I}}, Proc. Cambridge Philos. Soc. \textbf{39} (1943), 133--153. \MR{0008639
  (5,38e)}

\bibitem[Aro48]{Aro48}
\bysame, \emph{Reproducing and pseudo-reproducing kernels and their application
  to the partial differential equations of physics}, Studies in partial
  differential equations. Technical report 5, preliminary note, Harvard
  University, Graduate School of Engineering., 1948. \MR{0031663 (11,187b)}

\bibitem[AW14]{MR3210706}
Charles Akemann and Nik Weaver, \emph{A {L}yapunov-type theorem from
  {K}adison-{S}inger}, Bull. Lond. Math. Soc. \textbf{46} (2014), no.~3,
  517--524. \MR{3210706}

\bibitem[AY09]{MR2509983}
Sergio Albeverio and Minoru~W. Yoshida, \emph{Hida distribution construction of
  non-{G}aussian reflection positive generalized random fields}, Infin. Dimens.
  Anal. Quantum Probab. Relat. Top. \textbf{12} (2009), no.~1, 21--49.
  \MR{2509983 (2010d:60157)}

\bibitem[BCL11]{MR2837145}
Radu Balan, Pete Casazza, and Zeph Landau, \emph{Redundancy for localized
  frames}, Israel J. Math. \textbf{185} (2011), 445--476. \MR{2837145
  (2012i:42036)}

\bibitem[BF03]{BeFi03}
JohnJ. Benedetto and Matthew Fickus, \emph{Finite normalized tight frames},
  Advances in Computational Mathematics \textbf{18} (2003), no.~2-4, 357--385
  (English).

\bibitem[Bil05]{BDP05}
Dutkay Dorin Ervin Picioroaga~Gabriel Bildea, Stefan, \emph{Mra
  super-wavelets.}, The New York Journal of Mathematics [electronic only]
  \textbf{11} (2005), 1--19 (eng).

\bibitem[Cas13]{MR2964016}
Peter~G. Casazza, \emph{The {K}adison-{S}inger and {P}aulsen problems in finite
  frame theory}, Finite frames, Appl. Numer. Harmon. Anal.,
  Birkh\"auser/Springer, New York, 2013, pp.~381--413. \MR{2964016}

\bibitem[{Cas}14]{2014arXiv1407.4768C}
P.~G. {Casazza}, \emph{{Consequences of the Marcus/Spielman/Stivastava solution
  to the Kadison-Singer Problem}}, ArXiv e-prints (2014).

\bibitem[CC03]{CaCh03}
PeterG. Casazza and Ole Christensen, \emph{Gabor frames over irregular
  lattices}, Advances in Computational Mathematics \textbf{18} (2003), no.~2-4,
  329--344 (English).

\bibitem[CD93]{CoDa93}
Albert Cohen and Ingrid Daubechies, \emph{Non-separable bidimensional wavelet
  bases}, Revista Matem{\'{a}}tica Iberoamericana (1993), 51--137.

\bibitem[Chr03]{Chr03}
Ole Christensen, \emph{An introduction to frames and riesz bases},
  Birkh{\"a}user Boston, 2003.

\bibitem[Chr14]{MR3167899}
Ole Christensen, \emph{A short introduction to frames, {G}abor systems, and
  wavelet systems}, Azerb. J. Math. \textbf{4} (2014), no.~1, 25--39.
  \MR{3167899}

\bibitem[CS02]{CS02}
Felipe Cucker and Steve Smale, \emph{On the mathematical foundations of
  learning}, Bull. Amer. Math. Soc. (N.S.) \textbf{39} (2002), no.~1, 1--49
  (electronic). \MR{1864085 (2003a:68118)}

\bibitem[DS88]{DS88b}
Nelson Dunford and Jacob~T. Schwartz, \emph{Linear operators. {P}art {II}},
  Wiley Classics Library, John Wiley \& Sons Inc., New York, 1988, Spectral
  theory. Selfadjoint operators in Hilbert space, With the assistance of
  William G. Bade and Robert G. Bartle, Reprint of the 1963 original, A
  Wiley-Interscience Publication. \MR{1009163 (90g:47001b)}

\bibitem[Dut06]{Dutkay_2006}
Dorin~Ervin Dutkay, \emph{Low-pass filters and representations of the baumslag
  solitar group}, Transactions of the American Mathematical Society
  \textbf{358} (2006), no.~12, 5271--5292.

\bibitem[Eld02]{Eld02}
Yonina~C. Eldar, \emph{Least-squares inner product shaping}, Linear Algebra and
  its Applications \textbf{348} (2002), no.~1-3, 153--174.

\bibitem[FJKO05a]{MR2147063}
Matthew Fickus, Brody~D. Johnson, Keri Kornelson, and Kasso~A. Okoudjou,
  \emph{Convolutional frames and the frame potential}, Appl. Comput. Harmon.
  Anal. \textbf{19} (2005), no.~1, 77--91. \MR{2147063 (2006d:42050)}

\bibitem[FJKO05b]{FJKO05}
Matthew Fickus, Brody~D. Johnson, Keri Kornelson, and Kasso~A. Okoudjou,
  \emph{Convolutional frames and the frame potential}, Applied and
  Computational Harmonic Analysis \textbf{19} (2005), no.~1, 77--91.

\bibitem[GI13]{MR3203830}
Daniele Guido and Tommaso Isola, \emph{Zeta functions for infinite graphs and
  functional equations}, Fractal geometry and dynamical systems in pure and
  applied mathematics. {II}. {F}ractals in applied mathematics, Contemp. Math.,
  vol. 601, Amer. Math. Soc., Providence, RI, 2013, pp.~123--146. \MR{3203830}

\bibitem[HKL{\etalchar{+}}14]{HKL14}
S.~{Haeseler}, M.~{Keller}, D.~{Lenz}, J.~{Masamune}, and M.~{Schmidt},
  \emph{{Global properties of Dirichlet forms in terms of Green's formula}},
  ArXiv e-prints (2014).

\bibitem[HKLW07]{MR2367342}
Deguang Han, Keri Kornelson, David Larson, and Eric Weber, \emph{Frames for
  undergraduates}, Student Mathematical Library, vol.~40, American Mathematical
  Society, Providence, RI, 2007. \MR{2367342 (2010e:42044)}

\bibitem[Hua14]{MR3195830}
Xueping Huang, \emph{Escape rate of {M}arkov chains on infinite graphs}, J.
  Theoret. Probab. \textbf{27} (2014), no.~2, 634--682. \MR{3195830}

\bibitem[J{\o}r77]{MR0438178}
Palle E.~T. J{\o}rgensen, \emph{Approximately invariant subspaces for unbounded
  linear operators. {II}}, Math. Ann. \textbf{227} (1977), no.~2, 177--182.
  \MR{0438178 (55 \#11097)}

\bibitem[J{\o}r78]{MR507913}
\bysame, \emph{Essential self-adjointness of semibounded operators}, Math. Ann.
  \textbf{237} (1978), no.~2, 187--192. \MR{507913 (80b:47035)}

\bibitem[Kat57]{Kat57}
Tosio Kato, \emph{On the {H}ilbert matrix}, Proc. Amer. Math. Soc. \textbf{8}
  (1957), 73--81. \MR{0083965 (18,786f)}

\bibitem[KH11]{KH11}
Sanjeev Kulkarni and Gilbert Harman, \emph{An elementary introduction to
  statistical learning theory}, Wiley Series in Probability and Statistics,
  John Wiley \& Sons, Inc., Hoboken, NJ, 2011. \MR{2908346}

\bibitem[LP11]{MR2975345}
Sneh Lata and Vern Paulsen, \emph{The {F}eichtinger conjecture and reproducing
  kernel {H}ilbert spaces}, Indiana Univ. Math. J. \textbf{60} (2011), no.~4,
  1303--1317. \MR{2975345}

\bibitem[MSS13]{2013arXiv1306.3969M}
A.~{Marcus}, D.~A {Spielman}, and N.~{Srivastava}, \emph{{Interlacing Families
  II: Mixed Characteristic Polynomials and the Kadison-Singer Problem}}, ArXiv
  e-prints (2013).

\bibitem[Nel57]{Nel57}
Edward Nelson, \emph{Kernel functions and eigenfunction expansions}, Duke Math.
  J. \textbf{25} (1957), 15--27. \MR{0091442 (19,969f)}

\bibitem[Ros58]{Ros58}
Marvin Rosenblum, \emph{On the {H}ilbert matrix. {II}}, Proc. Amer. Math. Soc.
  \textbf{9} (1958), 581--585. \MR{0099599 (20 \#6038)}

\bibitem[SS13]{MR3101840}
Oded Schramm and Scott Sheffield, \emph{A contour line of the continuum
  {G}aussian free field}, Probab. Theory Related Fields \textbf{157} (2013),
  no.~1-2, 47--80. \MR{3101840}

\bibitem[SZ09]{SZ09}
Steve Smale and Ding-Xuan Zhou, \emph{Online learning with {M}arkov sampling},
  Anal. Appl. (Singap.) \textbf{7} (2009), no.~1, 87--113. \MR{2488871
  (2010i:60021)}

\bibitem[Tau49]{Tau49}
Olga Taussky, \emph{A remark concerning the characteristic roots of the finite
  segments of the {H}ilbert matrix}, Quart. J. Math., Oxford Ser. \textbf{20}
  (1949), 80--83. \MR{0030567 (11,16a)}

\bibitem[Vul13]{MR3091062}
Mirjana Vuleti{\'c}, \emph{The {G}aussian free field and strict plane
  partitions}, 25th {I}nternational {C}onference on {F}ormal {P}ower {S}eries
  and {A}lgebraic {C}ombinatorics ({FPSAC} 2013), Discrete Math. Theor. Comput.
  Sci. Proc., AS, Assoc. Discrete Math. Theor. Comput. Sci., Nancy, 2013,
  pp.~1041--1052. \MR{3091062}

\bibitem[ZXZ12]{MR2913695}
Haizhang Zhang, Yuesheng Xu, and Qinghui Zhang, \emph{Refinement of
  operator-valued reproducing kernels}, J. Mach. Learn. Res. \textbf{13}
  (2012), 91--136. \MR{2913695}

\end{thebibliography}
\end{acknowledgement*}

\end{document}